\newtheorem{theo}{Theorem}[section]
\newtheorem{lemma}[theo]{Lemma}
\newtheorem{propo}[theo]{Proposition}
\newtheorem{defi}[theo]{Definition}
\newtheorem{coro}[theo]{Corollary}
\newtheorem{rem}[theo]{Remark}
\newtheorem{exam}[theo]{Example}
\newcommand\Ind{\operatorname{Ind}}
\newcommand\Mod{\operatorname{Mod}}
\newcommand\Hom{\operatorname{Hom}}
\newcommand\op{\operatorname{op}}
\newcommand\coex{\operatorname{coex}}
\newcommand\ex{\operatorname{ex}}
\newcommand\Set{\operatorname{\bf Set}}
\newcommand\Ab{\operatorname{\bf Ab}}
\newcommand\ca{\mathcal {A}}
\newcommand\cb{\mathcal {B}}
\newcommand\cu{\mathcal {U}}
\newcommand\ci{\mathcal {I}}
\newcommand\ck{\mathcal {K}}
\newcommand\cl{\mathcal {L}}
\newcommand\cm{\mathcal {M}}
\newcommand\cn{\mathcal {N}}
\newcommand\cx{\mathcal {X}}
\newcommand\ratls{\mathbb {Q}}
\newcommand\ints{\mathbb {Z}}
\date{January 11, 2020}
\begin{document}
\title[Cofibrant generation of pure monomorphisms]
{Cofibrant generation of pure monomorphisms}

\author[Lieberman]{Michael Lieberman}
\email{lieberman@math.muni.cz}
\urladdr{http://www.math.muni.cz/\textasciitilde lieberman/}
\address{Institute of Mathematics, Faculty of Mechanical Engineering, Brno University of Technology, Brno, Czech Republic}
\address{Department of Mathematics and Statistics, Faculty of Science, Masaryk University, Brno, Czech Republic}

\author[Positselski]{Leonid Positselski}
\email{positselski@math.cas.cz}
\urladdr{http://users.math.cas.cz/\textasciitilde positselski/}
\address{Institute of Mathematics of the Czech Academy of Sciences,
\v Zitn\'a~25, 115~67 Prague~1, Czech Republic}
\address{Laboratory of Algebra and Number Theory, Institute for Information
Transmission Problems, Moscow 127051, Russia}
\thanks{The second author is supported by research plan RVO:~67985840}
 
\author[Rosick\'y]{Ji\v r\'i Rosick\'y}
\email{rosicky@math.muni.cz}
\urladdr{http://www.math.muni.cz/\textasciitilde rosicky/}
\address{Department of Mathematics and Statistics, Faculty of Science, Masaryk University, Brno, Czech Republic}
\thanks{The third author is supported by the Grant agency of the Czech Republic under the grant 19-00902S}

\author[Vasey]{Sebastien Vasey}
\email{sebv@math.harvard.edu}
\urladdr{http://math.harvard.edu/\textasciitilde sebv/}
\address{Department of Mathematics \\ Harvard University \\ Cambridge, Massachusetts, USA}
 
\begin{abstract}
We show that pure monomorphisms are cofibrantly generated---generated from a set of morphisms by pushouts, transfinite composition, and retracts---in any locally finitely presentable additive category. In particular, this is true in any category of $R$-modules.
 On the other hand, the classes of all monomorphisms and regular monomorphisms
in a locally finitely presentable additive category need not be well-behaved.
\end{abstract} 
\keywords{}
\subjclass{}

\maketitle

\section{Introduction}
Purity plays a significant role in module theory (see e.g. \cite{P}) and, in particular, in abelian group theory, where it first arose, \cite{Pr}. Pure monomorphisms can be defined in various ways---most typically, in algebraic contexts, in terms of the preservation of exact sequences---but can be characterized as directed colimits of split monomorphisms. Pure injective modules, namely those modules that are injective with respect to pure monomorphisms, form an important class of modules lying between cotorsion and injective modules. Recall that we may associate with any left (right) $R$-module $M$ its character module, $M^+=\Hom_{\ints}(M,\ratls/\ints)$, which is, in turn, a right (left) $R$-module. Any character module is pure injective and the embedding of a module $M$ into its double character module $M^{++}$ makes $M$ a pure submodule of a pure injective module \cite[Proposition 5.3.9]{EJ}. Thus any category of modules has enough pure injectives. \cite{He} proved that any locally finitely presentable additive category has enough pure injectives. Pure injectives also play a role in (non-additive) universal algebra, where they are called equationally compact algebras: in that case, however, one will not necessarily have enough pure injectives.

In the context of module theory, the cotorsion modules and the injective modules form accessible categories.  This follows from the fact that, respectively, the class of flat monomorphisms and the class of all monomorphisms are cofibrantly generated; that is, they can be generated by (retracts of) transfinite compositions of pushouts from a \emph{set} of morphisms.  The next natural question is whether pure monomorphisms, too, are cofibrantly generated: the goal of our paper is to answer this question in the affirmative. As a consequence, we will see that pure injectives form an accessible category. Moreover, we prove the result not merely for categories of modules, but rather for arbitrary locally finitely presentable additive categories. The module case, incidentally, was recently proven in \cite{ST}. Our result also generalizes (via stable independence \cite{LRV}) the model-theoretic stability of the category of $R$-modules with pure embeddings, established in \cite{kucera-mazari}.

We note that locally finitely presentable abelian categories are Grothendieck and recall that the usual proof that Grothendieck categories have enough injectives uses the fact that they have effective unions of subobjects (see \cite{Ba}). Effective unions of pure subobjects were introduced in \cite{BR}, and it was shown there that they imply cofibrant generation of pure monomorphisms. Unfortunately we cannot take this approach, as even the category of abelian groups fails to have effective unions of pure subobjects (see Example~\ref{unions}).  Instead, we employ the (upper)
representation category construction (see \cite{C} or \cite{K}) to embed the starting locally finitely presentable category $\ck$ into a locally finitely presentable coexact category $\cu(\ck)$ in such a way that pure monomorphisms in $\ck$ are sent to regular monomorphisms in $\cu(\ck)$, and the pure injectives of $\ck$ map to precisely the regular injectives of $\cu(\ck)$. Under the assumption that $\ck$ is additive, we will see that $\cu(\ck)$ has enough regular injectives, which gives us the corresponding result for pure injectives in $\ck$.


 On the other hand, a locally finitely presentable additive category $\ck$ need not have effective unions of subobjects.
Besides, (regular) monomorphisms in $\ck$ need not be stable under
pushouts.
 Consequently, $\ck$ may fail to have enough injectives or regular
injectives.
 We exhibit and discuss some examples of locally finitely presentable
additive categories with bad exactness properties and strange behavior of
injective objects.

\section{Preliminaries}
Locally finitely presentable categories were introduced in \cite{GU}.  The notion of \emph{presentability} was based on the observation that, say, an $R$-module $M$ 
is finitely presented if and only if the associated hom-functor $\Hom(M,-):\Mod_R\to\Set$ preserves directed colimits: in a general category 
$\ck$, objects satisfying the latter condition are called \textit{finitely presentable}. We note that by \emph{directed colimits}, we mean those colimits indexed by a directed poset---\emph{direct limits}, in the older terminology. A cocomplete category $\ck$ is \textit{locally finitely presentable} if it has, up to isomorphism, a set $\ca$ of finitely presentable objects such that every object of $\ck$ is a directed colimit of objects from $\ca$. Every category of 
$R$-modules and every variety of finitary algebras is locally finitely presentable. By replacing $\aleph_0$ by an arbitrary regular cardinal $\kappa$ and directed colimits by $\kappa$-directed ones, we arrive at \textit{locally $\kappa$-presentable categories}. More generally, we say that a category is \textit{locally presentable} if it is locally $\kappa$-presentable for some regular cardinal $\kappa$. By weakening
cocompleteness of $\ck$ to the existence of directed colimits, we arrive at the notion of a  \textit{finitely accessible category} and, analogously,
\textit{$\kappa$-accessible} and \textit{accessible} ones (see \cite{MP} or \cite{AR}).

Purity is usually defined model-theoretically; that is, pure embeddings are those that are elementary with respect to positive-primitive formulas. In locally
finitely presentable categories, there is an alternative, category-theoretic definition which was introduced in \cite{fakir} and simplified to its current form in \cite{AR}.

\begin{defi}\label{defpure}
{
\em
	Let $\ck$ be a category.  We say that a morphism $h:K\to L$ in $\ck$ is \emph{$\aleph_0$-pure} (or, more simply, \emph{pure}) if for any commutative square 
	$$  
      \xymatrix@=3pc{
        K \ar@{}\ar[r]^h{} & L \\
        M \ar [u]^{u} \ar [r]_{f} &
        N \ar[u]_{v}
      }
      $$
      with $M$ and $N$ finitely presentable, $u$ factors through $f$; that is, there is a morphism $g:N\to K$ with $u=gf$.
}
\end{defi}

\begin{rem}\label{purermk}
{
\em
(1) There is no mention of $h$ being a monomorphism in Definition~\ref{defpure}: in any locally finitely presentable category, the pure morphisms will automatically be monomorphisms \cite[2.29]{AR}.  Indeed, they will even be \emph{regular} monomorphisms (\cite[2.31]{AR}. 

(2) In any locally finitely presentable category, pure monomorphisms are precisely directed colimits of split monomorphisms (\cite[2.30]{AR}.

(3) In categories of modules, purity in the sense of Definition~\ref{defpure} corresponds to the usual algebraic formulations of the notion.
}
\end{rem}

We recall a few basic facts about the class of pure monomorphisms (see \cite[p.~85]{AR}):

\begin{rem}\label{pureprops}
{
\em 
In any locally finitely presentable category $\ck$, the class of pure monomorphisms: 
	\begin{enumerate}
		\item Contains all split monomorphisms, hence contains all isomorphisms, and is closed under composition, \cite[2.28]{AR}.
		\item Is closed under directed colimits. In particular, it is closed under \textit{transfinite compositions}.  That is, given
		a smooth chain $(f_{ij}\colon A_i \to A_j)_{i\leq j\leq \lambda}$ (i.e. a chain in which $(f_{ij}\colon A_i \to A_j)_{i<j}$ is a colimit for any limit ordinal $j\leq\lambda$) such that $f_{i,i+1}$ is pure for each $i< \lambda$, then $f_{0\lambda}$ is pure.
		\item Is  \textit{left-cancellable}: if $gf$ is pure, so is $f$, \cite[2.28]{AR}.
		\item Is closed under retracts in the category $\ck^\to$ of morphisms of $\ck$, following (1) and (3) above.
		\item Is stable under pushouts.  By this we mean that in a pushout diagram
$$
\xymatrix@C=3pc@R=3pc{
A \ar[r]^f  \ar[d]_g & B \ar[d]^{\bar g} \\
C\ar [r]_{\bar f} & D
}
$$
$\bar f$ is a pure monomorphism provided that $f$ is a pure monomorphism (see \cite{AR1}).
	\end{enumerate}}
\end{rem}

A class of morphisms in a category $\ck$ is \textit{cofibrantly closed} if it closed under transfinite compositions, pushouts
and retracts in the category $\ck^\to$ (see, e.g., \cite{AHRT}). Hence the class of pure monomorphisms is cofibrantly closed in any locally finitely presentable category. We note that the notion of cofibrant closedness stems from homotopy theory, where the crucial question is whether a given cofibrantly closed class $\cm$ of morphisms is \textit{cofibrantly generated}; that is, if there is a set $\cx$ such that $\cm$ is the closure of $\cx$ under transfinite compositions, pushouts and retracts in the category of morphisms (see \cite{B}). This property is essential for many reasons, not least because cofibrant generation of $\cm$ ensures that it forms the left half of a \emph{weak factorization system} $(\cm,\cm^\square)$, meaning in particular that any morphism of $\ck$ factorizes as a morphism 
from $\cm$ followed by a morphism from $\cm^\square$. Here, $\cm^\square$ consists of morphisms $g$ having the \textit{right lifting
property} with respect to every morphism $f$ from $\cm$: in any commutative square
$$
\xymatrix@=3pc{
A \ar[r]^{u} \ar[d]_{f}& C \ar[d]^g\\
B\ar[r]_v & D
}
$$
there is a diagonal $d:B \to C$ with $df=u$ and $gd=v$.

\begin{rem}
{
\em
Since the class $\cm$ of pure monomorphisms is left-cancellable, $(\cm,\cm^\square)$ is a weak factorization system if and only if $\ck$ has
enough pure injectives (see \cite[1.6]{AHRT}). Moreover, in an additive locally finitely presentable category $\ck$, $\cm^\square$
is precisely the class of projections $p_1:A\times B\to A$ with $B$ pure injective. 

Indeed, following \cite[1.6]{AHRT}, any such projection belongs to $\cm^\square$. Conversely, $\cm$ contains the class $\cn$ of split monomorphisms, hence $\cm^\square\subseteq\cn^\square$. If $\ck$ is additive, $\cn^\square$ consists of product projections (see \cite[2.7]{RT}), so any morphism from $\cm^\square$ is a projection $p_1:A\times B\to A$. Then the injection $j_2:B\to A\times B$ is the kernel of $p_1$ and, given a pure monomorphism $f:K\to L$ and a morphism $u:K\to B$, the square
$$
\xymatrix@=3pc{
K \ar[r]^{j_2u} \ar[d]_{f}&A\times  B \ar[d]^{p_1}\\
L\ar[r]_0 & A
}
$$
has a diagonal $d:L\to A\times B$. Since $p_1d=0$, $d$ factorizes through its kernel, namely $j_2$, which proves that $B$ is pure injective.

This is analogous to the well-known fact that, if $\cm$ is the class of \emph{all monomorphisms} in an additive locally presentable category, 
then $\cm^\square$ consists of precisely the projections $p_1:A\times B\to A$ where $B$ is \emph{injective}.
}
\end{rem}

Grothendieck abelian categories have enough injectives because they have \textit{effective unions} (see \cite{Ba}).
This means that, given any diagram
\begin{equation} \tag{$*$}
\begin{gathered}
\xymatrix{
A \ar[rr]^f  \ar[rd]^{f'} && C \\
 & E \ar [ur]_h&\\
D\ar[uu]^{\bar g} \ar[rr]_{\bar f} && B\ar[ul]_{g'} \ar[uu]_g
}
\end{gathered}
\end{equation}
in which 
\begin{enumerate}\item $f$ and $g$ are regular monomorphisms, 
\item the outer square is a pullback, 
\item and the inner tetragon (consisting of $f',\bar g,g'$ and $\bar f$) is a pushout,\end{enumerate}
the (uniquely defined) induced map $h$ is a regular monomorphism.

Following \cite{BR}, a locally finitely presentable category $\ck$ has \textit{pure effective unions} if whenever $f$ and $g$ in the diagram above
are pure monomorphisms, and the other conditions are satisfied, then $h$ is pure. One can show that, under the assumption of pure effective unions, $\ck$ has enough pure subobjects (\cite[2.4]{BR}). In fact,
the proof shows that pure monomorphisms are cofibrantly generated in this special case.

We note, however, that having pure effective unions is far too strong an assumption:

\begin{exam}\label{unions}
{
\em
$\Ab$ does not have pure effective unions.  Consider the pullback square
$$  
      \xymatrix@=3pc{
        \mathbb{Z} \ar@{}\ar[r]^{} & (\mathbb{Z}\oplus\mathbb{Z}) + \frac{1}{2}(1,1) \\
        0 \ar [u]^{} \ar [r]_{} &
        \mathbb{Z} \ar[u]_{}
      }
      $$
where the upper horizontal morphism and the right vertical morphism are the coproduct injections.  The inclusions on the left and the bottom are clearly pure.       
But $h:\mathbb Z\oplus \mathbb Z\to (\mathbb{Z}\oplus\mathbb{Z}) + \frac{1}{2}(1,1)$, the induced map from the pushout, is not.
}
\end{exam}

This forces us to adopt a more delicate approach.

\section{Representation categories} \label{repr-secn}

A category with finite limits is called \textit{exact} if it has coequalizers of kernel pairs, effective equivalence relations and regular epimorphisms stable under pullbacks, \cite{BGV}. An additive category is abelian if and only if it is exact. An \textit{exact completion} $\ca_{\ex}$ of a finitely complete category $\ca$ is a free exact category over $\ca$. This means that there is a finite
limit preserving functor $E_\ca:\ca\to\ca_{\ex}$ such that for any finite limit preserving functor $F:\ca\to\cb$, $\cb$ an exact category, there is a unique exact (i.e. preserving finite limits and regular epimorphisms) functor $\hat{F}:\ca_{\ex}\to\cb$ with 
$\hat{F}E_\ca\cong F$, \cite{CM}. 

Dualizing, we may speak of \emph{coexactness}, and define the \textit{coexact completion} $C_\ca:\ca\to\ca_{\coex}$ of a finitely cocomplete category $\ca$.

For a small category $\ca$, $\Ind\ca$ is its free completion under directed colimits. This completion is locally finitely presentable
provided that $\ca$ has finite colimits, see \cite{GU}, \cite{MP}, or \cite{AR}.

\begin{defi}\label{repres}
{
\em
Let $\ck$ be a locally finitely presentable category and $\ca$ its (representative) full subcategory of finitely presentable objects.  Let $C_\ca:\ca\to\ca_{\coex}$ be the coexact completion of $\ca$. Then $\cu(\ck)=\Ind\ca_{\coex}$ will be called an \textit{(upper) representation category} of $\ck$ and 
$$
U =\Ind C_\ca:\ck=\Ind\ca\to\Ind(\ca_{\coex})=\cu(\ck)
$$
will be the induced full embedding. 
}
\end{defi}

\begin{exam}
{
\em
 The following example may be illuminating.
 Let $R$ be an associative ring and $\ck=\Mod_R$ the category of right
$R$-modules.
 Then $\ca=\operatorname{mod}_R\subset\ck$ is the full subcategory of
finitely presentable right $R$-modules.
 Consider also the additive category ${}_R\operatorname{mod}$ of finitely
presentable left $R$-modules.

 The upper representation category $\cu(\ck)$ can be described as
the category of ``left $({}_R\operatorname{mod})$-modules,'' namely the
covariant additive functors ${}_R\operatorname{mod}\to\Ab$.
 The functor $U:\ck\to\cu(\ck)$ takes a right $R$-module $M$ to
the tensor product functor $M\otimes_R{-}$.
 The coexact completion $\ca_{\coex}$ of the category $\ca$ is
the category of finitely presentable objects in $\cu(\ck)$; that is,
finitely presentable left $({}_R\operatorname{mod})$-modules.
 The functor $M\otimes_R{-}:{}_R\operatorname{mod}\to\Ab$ is finitely
presentable for any finitely presentable right $R$-module $M$, so
the functor $U$ takes the full subcategory $\ca\subset\ck$ into
the full subcategory $\ca_{\coex}\subset\cu(\ck)$, as it should.
 This describes the functor $E_{\ca}:\ca\to\ca_{\coex}$.

 Clearly, $\cu(\ck)$ is a Grothendieck abelian category.
 The category ${}_R\operatorname{mod}$ has weak cokernels (in fact, it has
cokernels); therefore, the category $\ca_{\coex}$ is abelian,
too~\cite[Lemma~2.2]{K}.
}
\end{exam}

This construction was introduced in the unpublished paper \cite{H}. In the abelian case, it coincides with the upper representation categories of, e.g. \cite{C} and \cite{K}.

\begin{lemma}\label{lfp}
The category $\cu(\ck)$ is locally finitely presentable and $U:\ck\to\cu(\ck)$ preserves colimits.
\end{lemma}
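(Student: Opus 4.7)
The plan is to deduce both claims from the universal properties of the $\Ind$ construction and of the coexact completion, together with the observation that $\ca$---being the full subcategory of finitely presentable objects of the locally finitely presentable category $\ck$---is essentially small and closed under finite colimits.

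For local finite presentability, I would verify that $\ca_{\coex}$ is (equivalent to) a small category with finite colimits, whence the cited theorem of Gabriel--Ulmer immediately yields that $\cu(\ck) = \Ind\ca_{\coex}$ is locally finitely presentable. Essential smallness of $\ca_{\coex}$ follows from the explicit construction of the coexact completion of a small category; finite cocompleteness of $\ca_{\coex}$ is built into the very definition of a coexact category.

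For the second assertion, note that $U = \Ind C_\ca$ preserves directed colimits automatically: by the defining universal property of the $\Ind$ construction applied to functors, $\Ind C_\ca$ is the essentially unique directed-colimit-preserving extension of $C_\ca$ along $\ca\hookrightarrow\Ind\ca$. It therefore suffices to show that $U$ preserves finite colimits. Dually to the defining property of the exact completion $E_\ca$ (which preserves finite limits), the coexact completion $C_\ca$ preserves finite colimits; this is the only mildly subtle step, but it is just the dual of the corresponding property of $E_\ca$ recalled at the start of the section. From this, the general principle that $\Ind$ of a finite-colimit-preserving functor again preserves finite colimits applies: any finite diagram in $\Ind\ca$ can be rewritten as a directed colimit of finite diagrams in $\ca$, and finite colimits commute with directed colimits in $\Ind$-categories. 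Since in a locally finitely presentable category every colimit is a directed colimit of finite colimits, $U$ therefore preserves all colimits.
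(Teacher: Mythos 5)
Your proposal is correct and follows essentially the same route as the paper: local finite presentability of $\cu(\ck)$ from finite cocompleteness of the (essentially small) category $\ca_{\coex}$ via Gabriel--Ulmer, and preservation of all colimits by combining preservation of finite colimits (inherited from $C_\ca$) with preservation of directed colimits (automatic for $\Ind$ of a functor). The paper's proof is just a terser version of the same argument.
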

\begin{proof}
Since $\ca_{\coex}$ is finitely cocomplete, $\cu(\ck)$ is locally finitely presentable. Since $C:\ca\to\ca_{coex}$ preserves finite colimits, $U$ does the same and, since it preserves directed colimits as well, it preserves arbitrary colimits. 
\end{proof}

\begin{rem}\label{coexact}
{
\em
Moreover, $\cu(\ck)$ is a free locally finitely presentable coexact category over $\ck$. In fact, $\cu(\ck)$ is coexact and
$U$ preserves colimits. Let $H:\ck\to\Ind\cb$ be a colimit preserving functor to a locally finitely presentable coexact category.
Since $\ca$ has finite colimits preserved by the inclusion $Y:\ca\to\Ind\ca$ (see the beginning of the proof of \cite{AR} 4.13),
the composition $HY$ preserves finite colimits. Thus there is an essentially unique coexact functor $\bar{H}:\ca_{coex}\to\Ind\cb$. Hence there is an essentially unique coexact functor $G:\cu(\ca)\to\Ind\cb$ such that $GY\cong H$.
}
\end{rem}
An object of a category is called \textit{regularly fp-injective} if it is injective with respect to regular monomorphisms with finitely presentable domains and codomains.

\begin{propo}\label{reginj}
$U(\ck)$ consists of precisely the regularly fp-injectives in $\cu(\ck)$.
\end{propo}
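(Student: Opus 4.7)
I would establish the proposition by proving both containments separately.

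For the forward direction, I would show that each $U(K)$ is regularly fp-injective. The finitely presentable objects of $\cu(\ck) = \Ind\ca_{\coex}$ are, up to isomorphism, exactly the objects of $\ca_{\coex}$, so regular fp-injectivity means injectivity with respect to regular monomorphisms in $\ca_{\coex}$. Given such a regular monomorphism $j \colon P \to Q$ and a map $f \colon P \to U(K)$, write $K = \colim_\alpha A_\alpha$ with $A_\alpha \in \ca$; by Lemma~\ref{lfp}, $U(K) = \colim_\alpha C_\ca(A_\alpha)$ in $\cu(\ck)$. Since $P$ is finitely presentable, $f$ factors through some $C_\ca(A_\alpha)$. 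The nontrivial step is extending this factorization to $Q$, for which I would exploit the universal property of the coexact completion together with the (dual) structural fact that every object of $\ca_{\coex}$ admits a regular monomorphism into some $C_\ca(A)$ with $A \in \ca$, and pass to a sufficiently late stage $\beta \geq \alpha$ in the directed colimit to absorb any obstruction.

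For the reverse direction, I would show that every regularly fp-injective $X \in \cu(\ck)$ lies in the essential image of $U$. Since $U$ preserves all colimits between locally presentable categories, it has a right adjoint $V \colon \cu(\ck) \to \ck$, and the natural candidate for a preimage of $X$ is $K = V(X)$. Concretely, $V(X)$ is the restriction of $X$ (viewed as a presheaf on $\ca_{\coex}$) along $C_\ca$, and $UV(X)$ is the left Kan extension back to $\ca_{\coex}$. One then verifies that the counit $\epsilon_X \colon UV(X) \to X$ is bijective at each $B \in \ca_{\coex}$, using the structural regular monomorphism $B \hookrightarrow C_\ca(A)$ together with the fp-injectivity of $X$ to control both surjectivity and injectivity of $\epsilon_X(B)$.

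The main obstacle I anticipate is the analysis of regular monomorphisms in $\ca_{\coex}$ and their interaction with the embedding $C_\ca$. Making the extension step in the forward direction rigorous---and analogously the bijectivity check in the reverse---likely requires either a concrete description of $\ca_{\coex}$ via pseudo-coequivalence relations in $\ca$, or an abstract argument exploiting the freeness of $\cu(\ck)$ as a coexact category over $\ck$ stated in Remark~\ref{coexact}. In the abelian case, the statement reduces to the classical fact that the fp-injective objects of the functor category $({}_R\operatorname{mod},\Ab)$ are precisely the tensor functors $M \otimes_R (-)$, and this provides a reliable guide to the structure of the general argument.
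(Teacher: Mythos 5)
Your overall architecture (two containments, identifying the finitely presentable objects of $\cu(\ck)$ with the objects of $\ca_{\coex}$, factoring a map out of a finitely presentable object through a colimit stage, and the structural regular monomorphism $X\to C_\ca(A)$) matches the paper's, but the forward direction has a genuine gap. The paper's proof of that direction rests on one specific fact that your sketch never supplies: the objects $C_\ca(A)$, $A\in\ca$, are \emph{regularly injective in $\ca_{\coex}$}. This is obtained by presenting $\ca_{\coex}$ as a full subcategory of $(\Set^\ca)^{\op}$ containing the representables, with the embedding preserving finite colimits and regular monomorphisms; regular monomorphisms in $(\Set^\ca)^{\op}$ are pointwise surjections in $\Set^\ca$, against which representables are projective by Yoneda. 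Once you have this, the extension of $f\colon P\to C_\ca(A_\alpha)$ along the regular monomorphism $j\colon P\to Q$ exists already at stage $\alpha$, and closure of regular fp-injectivity under directed colimits finishes the argument. Your proposed substitute --- ``pass to a sufficiently late stage $\beta\geq\alpha$ to absorb any obstruction'' --- does not work: the obstruction to extending a map along a (regular) monomorphism is not of finite-presentability type and need not vanish at later stages of a directed system (the identity of $\ints$ never extends along multiplication by $2$ within a chain of copies of $\ints$ unless the colimit is divisible). The structural fact you do invoke, that every object of $\ca_{\coex}$ admits a regular monomorphism into some $C_\ca(A)$, is the tool for the \emph{converse} inclusion and does not produce the needed extension here.

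For the reverse direction your route differs from the paper's: you propose the right adjoint $V$ of $U$ and a pointwise verification that the counit $UV(X)\to X$ is an isomorphism, whereas the paper argues that every morphism $X\to K$ with $X\in\ca_{\coex}$ factors through some $UA$ (compose the regular embedding $X\to C_\ca(A)$ with fp-injectivity of $K$), so that $U(\ca)\downarrow K$ is cofinal in $\ca_{\coex}\downarrow K$ and $K$ is a directed colimit of objects $UA$, hence lies in $U(\ck)$. Your approach is workable and uses the same two ingredients, but the cofinality argument avoids any analysis of Kan extensions and of the explicit form of $V$; I would recommend adopting it. In either case, the proposition cannot be completed without first establishing the regular injectivity of the objects $C_\ca(A)$ in $\ca_{\coex}$.
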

\begin{proof}
The category $\ca_{\coex}$ is a full subcategory of the category $(\Set^\ca)^{\op}$ and $C_\ca$ is the codomain restriction
of the Yoneda embedding $\ca\to (\Set^\ca)^{\op}$ (see \cite[17.7]{ARV}. Moreover, the embedding $\ca_{\coex}\to(\Set ^\ca)^{\op}$ is coexact, i.e., preserves finite colimits and regular monomorphisms. Since hom-functors are regularly injective in $(\Set^\ca)^{\op}$, objects $C(A)$ are regularly injective in $\ca_{\coex}$. Thus they are regularly fp-injective in $\cu(\ca)$. Since regularly fp-injectives are closed under directed colimits, objects from $U(\ck)$ are regularly
fp-injective in $\cu(\ca)$.

Conversely, consider a regularly fp-injective object $K$ in $\cu(\ck)$. and a morphism $f:X\to K$ where $X$ is in $\ca_{\coex}$. There is
a regular monomorphism $h:X\to C(A)$ where $A$ is in $\ca$ (again, this is dual to the property of an exact completion, \cite{ARV} 17.8). Since $K$ is regularly fp-injective, there is $g:U A\to K$ with $gh=f$. Hence $U(\ca)\downarrow K$ is cofinal 
in $\ca_{\coex}\downarrow K$ and therefore $K$ belongs to $U(\ck)$.
\end{proof} 

The result above was proven in the abelian case in \cite{C}.

\begin{rem}\label{wrefl}
{
\em
Following \ref{reginj}, $U(\ck)$ is weakly reflective in $\cu(\ck)$ with a weak reflector $R:\cu(\ck)\to U(\ck)$ preserving directed colimits.
}
\end{rem}
 
The following result is in \cite{H}; the abelian case appears in \cite{C}.
 
\begin{lemma}\label{pure}
A morphism $h$ is a pure monomorphism in $\ck$ if and only if $Uh$ is a regular monomorphism in $\cu(\ck)$.
\end{lemma}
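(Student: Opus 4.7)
The proof splits into two directions; the forward implication follows quickly from closure properties already at hand, while the reverse direction is the substantive part. For the forward direction, suppose $h$ is a pure monomorphism in $\ck$. By Remark~\ref{purermk}(2), $h$ is a directed colimit of split monomorphisms in $\ck^\to$. The functor $U$ preserves colimits (Lemma~\ref{lfp}) and sends split monomorphisms to split monomorphisms, so $Uh$ is a directed colimit of split monomorphisms in $\cu(\ck)$. Since $\cu(\ck)$ is itself locally finitely presentable, Remark~\ref{purermk}(2) and~(1) apply there to make $Uh$ pure in $\cu(\ck)$, hence a regular monomorphism.

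For the backward direction, suppose $Uh$ is a regular monomorphism and consider a test square as in Definition~\ref{defpure} with $M,N\in\ca$. The embedding $C_\ca : \ca \to \ca_{\coex}$ is the codomain restriction of Yoneda (as noted in the proof of Proposition~\ref{reginj}), hence fully faithful, and so is $U = \Ind C_\ca$. It therefore suffices to produce $g' : UN \to UK$ in $\cu(\ck)$ with $g' \circ Uf = Uu$; then $g' = Ug$ for a unique $g : N \to K$ satisfying $gf = u$. My plan is to factor $Uf$ and handle the two halves separately. Since $\ca_{\coex}$ is coexact, the morphism $Uf : UM \to UN$ admits in $\ca_{\coex}$ a factorization as an epimorphism followed by a regular monomorphism, $Uf = i \circ e$, with $e : UM \to I'$, $i : I' \to UN$, and $I' \in \ca_{\coex}$ finitely presentable in $\cu(\ck)$. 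A routine verification, using fully faithfulness of the colimit-preserving inclusion $\ca_{\coex} \hookrightarrow \cu(\ck)$ together with finite presentability of the objects involved, shows that $e$ remains an epimorphism and $i$ a regular monomorphism in $\cu(\ck)$.

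Now set $\phi := Uv \circ i : I' \to UL$, so that $\phi \circ e = Uv \circ Uf = Uh \circ Uu$. Since $\cu(\ck)$ is complete and cocomplete, the regular monomorphism $Uh$ equals the equalizer of its cokernel pair $\iota_1,\iota_2 : UL \rightrightarrows UL \sqcup_{UK} UL$. From $\iota_1 \phi e = \iota_1 Uh Uu = \iota_2 Uh Uu = \iota_2 \phi e$ and the epi-ness of $e$, I get $\iota_1 \phi = \iota_2 \phi$, so $\phi = Uh \circ \tilde\phi$ for some $\tilde\phi : I' \to UK$; cancelling the monomorphism $Uh$ in $Uh \tilde\phi e = \phi e = Uh Uu$ yields $\tilde\phi \circ e = Uu$. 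Finally, since $UK$ is regularly fp-injective by Proposition~\ref{reginj} and $i$ is a regular monomorphism between the finitely presentable objects $I'$ and $UN$, the map $\tilde\phi$ extends along $i$ to some $g' : UN \to UK$ with $g' \circ i = \tilde\phi$; then $g' \circ Uf = g' i e = \tilde\phi e = Uu$, as required. I expect the main obstacle to be verifying that the coexact-category factorization of $Uf$ in $\ca_{\coex}$ persists as such in $\cu(\ck)$, which rests on $\ca_{\coex}$ sitting as a finite-colimit-closed dense subcategory of $\cu(\ck) = \Ind\,\ca_{\coex}$.
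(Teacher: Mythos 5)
Your proof is correct and follows essentially the same route as the paper's: the forward direction reduces to $Uh$ being pure (the paper cites \cite{AR}~2.37 where you use the directed-colimit-of-split-monos characterization, but these amount to the same thing), and the backward direction uses exactly the paper's (epimorphism, regular monomorphism)-factorization of $Uf$ in $\ca_{\coex}$, the lifting of $Uu$ across the epimorphism via the cokernel-pair description of the regular monomorphism $Uh$, and the regular fp-injectivity of $UK$ from Proposition~\ref{reginj}. The only difference is that you spell out the diagonal-fill-in and the persistence of the factorization from $\ca_{\coex}$ to $\cu(\ck)$ in more detail than the paper does.
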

\begin{proof}
If $h$ is a pure monomorphism then, following \cite{AR} 2.37, $Uh$ is a pure monomorphism, hence a regular monomorphism (see Remark~\ref{purermk}(1)). Conversely, let $Uh$ be a regular monomorphism and consider a commutative square
$$  
      \xymatrix@=3pc{
        K \ar@{}\ar[r]^h{} & L \\
        A \ar [u]^{u} \ar [r]_{f} &
        B \ar[u]_{v}
      }
      $$
in $\ck$ where $A$ and $B$ are in $\ca$. Let 
$$
UA \xrightarrow{\ f_1\ } X\xrightarrow{\ f_2\ } UB
$$
be an (epimorphism, regular monomorphism)-factorization of $Uf$ in $\ca_{\coex}$.      
Since $Uh$ is a regular monomorphism, there is $t:X\to UK$ such that $tf_1=Uu$. Since $UK$ is regularly fp-injective, there is 
$w:UB\to UK$ with $wf_2=t$. Then $w=U\bar{w}$ and $\bar{w}f=u$. Thus $h$ is a pure monomorphism.
\end{proof}

\begin{rem}\label{regular}
{
\em
The category $\cu(\ck)$ is coexact, so it has an (epimorphism, regular monomorphism)-factorization.  In particular, then, the class of regular monomorphisms in $\cu(\ck)$ is left cancellative. Hence regular monomorphisms form the left part of a weak factorization system
in $\cu(\ck)$ if and only if $\cu(\ck)$ has enough regular injectives (see \cite{AHRT} 1.6). As mentioned in the introduction, the latter means that any object of $\cu(\ck)$ is a regular subobject of a regular injective object.
}
\end{rem}

Since the class of pure monomorphisms in $\ck$ is left cancellative, it forms a left part of a weak factorization system if and only if $\ck$ has enough pure injectives.

\begin{theo}\label{pureinj}
Assume that $\cu(\ck)$ has enough regular injectives. Then $K$ is pure injective in $\ck$ if and only if $UK$ is regular injective
in $\cu(\ck)$.

Moreover, $\ck$ has enough pure injectives.
\end{theo}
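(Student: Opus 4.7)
My plan is to use the full embedding $U\colon \ck \to \cu(\ck)$ to trade regular injectivity in $\cu(\ck)$ for pure injectivity in $\ck$, relying on Lemma~\ref{pure} to translate pure monomorphisms into regular monomorphisms and on Proposition~\ref{reginj} to recognize which objects of $\cu(\ck)$ lie in the image of $U$. The hypothesis that $\cu(\ck)$ has enough regular injectives is exactly what will let me manufacture pure embeddings of objects of $\ck$ into pure injectives.

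I would start with the easy direction of the biconditional. Assuming $UK$ is regular injective, take a pure monomorphism $h\colon A \to B$ in $\ck$ and a morphism $u\colon A \to K$; by Lemma~\ref{pure} the arrow $Uh$ is a regular monomorphism, so regular injectivity of $UK$ produces $v\colon UB \to UK$ with $v\circ Uh = Uu$. Since $U$ is fully faithful, $v = U\bar v$ for a unique $\bar v\colon B\to K$, and then $\bar v h = u$, proving $K$ is pure injective.

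For the converse I would assume $K$ is pure injective and use enough regular injectives to fix a regular monomorphism $\iota\colon UK \to I$ with $I$ regular injective in $\cu(\ck)$. The crucial observation is that regular injectivity implies regular fp-injectivity, so by Proposition~\ref{reginj} the object $I$ lies in $U(\ck)$; write $I = UJ$. Full faithfulness then lets me write $\iota = U\bar\iota$ for some $\bar\iota\colon K \to J$, and by Lemma~\ref{pure} this $\bar\iota$ is pure. Pure injectivity of $K$ supplies a retraction $r\colon J \to K$ with $r\bar\iota = \id_K$; applying $U$ exhibits $UK$ as a retract of $I$ in $\cu(\ck)$, hence itself regular injective (retracts of regular injectives being regular injective against the same class of monomorphisms).

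The moreover clause falls out immediately: for any $K \in \ck$, the very same construction gives a regular monomorphism $\iota\colon UK \to I = UJ$ with $I$ regular injective, whence $\bar\iota\colon K \to J$ is pure by Lemma~\ref{pure} and $J$ is pure injective by the biconditional just established. I expect the conceptual obstacle to be precisely the appeal to Proposition~\ref{reginj} identifying $I$ with $UJ$: without it, the regular injective envelope of $UK$ need not correspond to any object of $\ck$, and there would be no way to transfer pure injectivity of $K$ into regular injectivity of $UK$, nor to produce a pure embedding of $K$ into a pure injective of $\ck$.
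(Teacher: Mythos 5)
Your proposal is correct and follows essentially the same route as the paper's own proof: the forward direction via Lemma~\ref{pure} and full faithfulness of $U$, the converse by embedding $UK$ into a regular injective, using Proposition~\ref{reginj} to identify it as $UJ$, and splitting the resulting pure monomorphism by pure injectivity of $K$. Your explicit remark that regular injectivity implies regular fp-injectivity (needed to invoke Proposition~\ref{reginj}) makes a step the paper leaves implicit, but the argument is the same.
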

\begin{proof}
Let $UK$ be regular injective. Consider a pure monomorphism $h:M\to N$ in $\ck$ and a morphism $f:M\to K$. Since $Uh$ is a regular monomorphism (see \ref{pure}), there is $g:UN\to UK$ with $gU(h)=Uf$. For $\bar{g}:N\to K$, we have $\bar{g}h=f$. Thus $K$ is pure
injective in $\ck$.

Conversely, let $K$ be pure injective. There is a regular monomorphism $h:UK\to X$ where $X$ is regularly injective in $\cu(\ck)$,
Following \ref{reginj}, $X=UL$ for some $L$ in $\ck$. Moreover, $h=U\bar{h}$ for $\bar{h}:K\to L$ and $\bar{h}$ is a pure monomorphism
(see \ref{pure}). Since $K$ is pure injective, $\bar{h}$ is a split monomorphism. Hence $h$ is a split monomorphism and $UK$ must be regular injective.

Finally, given $K$ in $\ck$, there is a regular monomorphism $h:UK\to UL$ where $UL$ is regular injective. Thus $\bar{h}:K\to L$ is
a pure monomorphism and $L$ is pure injective. It follows that $\ck$ has enough pure injectives.
\end{proof}

In the abelian case, $\cu(\ck)$ has enough regular injectives, see \cite{C}.

\begin{rem}\label{pureinj1}
{
\em
(1) The category of oriented multigraphs is locally finitely presentable and does not have enough pure injectives (see \cite {BR}
2.7(1)).

(2) If $\ck$ is (pre)additive, then $\ca_{\coex}$ is abelian (see e.g. \cite{RV}) and thus $\cu(\ck)$ is abelian as well.
Thus $\cu(\ck)$ has enough regular injectives, from which it follows that $\ck$ has enough pure injectives.
}
\end{rem}

\begin{theo}\label{stable}
Pure monomorphisms in $\ck$ are cofibrantly generated provided that regular monomorphisms in $\cu(\ck)$ are cofibrantly generated.
\end{theo}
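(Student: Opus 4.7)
The plan is to use the functor $U\colon \ck \to \cu(\ck)$ to transfer a cofibrant generating set from $\cu(\ck)$ to $\ck$. First, cofibrant generation of the regular monomorphisms in $\cu(\ck)$ together with the small object argument yields enough regular injectives in $\cu(\ck)$, whence Theorem~\ref{pureinj} gives enough pure injectives in $\ck$. Combined with the cofibrant closure of the class $\cm$ of pure monomorphisms (Remark~\ref{pureprops}), this makes $(\cm,\cm^\square)$ a weak factorization system. It then suffices to produce a set $\cy\subseteq\cm$ with $\cy^\square = \cm^\square$: by the small object argument applied to $\cy$ we obtain a weak factorization system $(\cof(\cy),\cy^\square)$, and since the left class of a weak factorization system is determined by its right class, $\cof(\cy) = \cm$.

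Fix a regular cardinal $\lambda$ so large that the given generators $\cx = \{f_i\colon X_i\to Y_i\}$ of the regular monomorphisms in $\cu(\ck)$ have $X_i$ and $Y_i$ $\lambda$-presentable, $U$ preserves $\lambda$-presentable objects, and every $\lambda$-presentable object of $\cu(\ck)$ admits a regular monomorphism into $UA$ for some $\lambda$-presentable $A\in\ck$. This last property is a $\lambda$-version of the fact established for $\ca_{\coex}$ in the proof of Proposition~\ref{reginj}, obtained by writing a $\lambda$-presentable object of $\cu(\ck)$ as a $\lambda$-small colimit of objects of $\ca_{\coex}$ and combining the resulting regular monomorphisms. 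Let $\cy$ be the set of all pure monomorphisms in $\ck$ between $\lambda$-presentable objects; clearly $\cy \subseteq \cm$.

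To verify $\cy^\square \subseteq \cm^\square$, let $g\in \cy^\square$ and let $k$ be a pure monomorphism in $\ck$. By Lemma~\ref{pure} and the full faithfulness of $U$, lifting $g$ against $k$ in $\ck$ is equivalent to lifting $Ug$ against the regular monomorphism $Uk$ in $\cu(\ck)$, so it suffices to prove $Ug\in\cx^\square$. Given a generator $f_i\colon X_i\to Y_i$ and a lifting problem $u\colon X_i\to UM$, $v\colon Y_i\to UN$: choose a regular monomorphism $\alpha\colon X_i\to UK$ with $K$ $\lambda$-presentable, form the pushout $P = Y_i\cup_{X_i}UK$ in $\cu(\ck)$ (so both $UK\to P$ and $Y_i\to P$ are regular monomorphisms by coexactness), and choose a regular monomorphism $P\to UL'$ with $L'$ $\lambda$-presentable. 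The composition $UK\to P\to UL'$ is a regular monomorphism, hence equals $Uh$ for a pure monomorphism $h\colon K\to L' \in \cy$ by Lemma~\ref{pure}. Using regularly fp-injectivity of $UM$ and $UN$ (Proposition~\ref{reginj}), extend $u$ along $\alpha$ to $\bar u\colon UK\to UM$, form the induced map $p\colon P\to UN$ from the pair $(Ug\bar u,\,v)$ via the pushout universal property, and extend $p$ along $P\to UL'$ to $\bar v_*\colon UL'\to UN$. By full faithfulness $\bar u = U\tilde u$ and $\bar v_* = U\tilde v$ for unique $\tilde u\colon K\to M$ and $\tilde v\colon L'\to N$; the resulting square with $h$ on the left and $g$ on the right commutes in $\ck$, so a diagonal $d\colon L'\to M$ exists by $g\in\cy^\square$. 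A direct chase using the pushout universal property then shows that $Ud$ composed with $Y_i\to P\to UL'$ is the desired diagonal $Y_i\to UM$ for $Ug$ against $f_i$.

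The main obstacle is the enlargement described in the second paragraph: showing that every sufficiently large $\lambda$-presentable object of $\cu(\ck)$ admits a regular monomorphism into $UA$ for some $\lambda$-presentable $A\in\ck$. This requires generalizing the argument for $\ca_{\coex}$ in the proof of Proposition~\ref{reginj} to $\lambda$-presentable objects of $\cu(\ck)=\Ind\ca_{\coex}$. Modulo this embedding property, the pushout construction and the diagram chase above work cleanly because the universal property of $P$ automatically produces the required compatibilities, obviating the need for ad hoc commutativity checks.
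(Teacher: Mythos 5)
Your overall strategy---reduce to producing a set $\cy\subseteq\cm$ with $\cy^\square=\cm^\square$ and then invoke the small object argument and the fact that a weak factorization system's left class is determined by its right class---is sound, your first paragraph is correct, and the final diagram chase through the pushout $P$ does work as stated. This is also a genuinely different route from the paper, whose entire proof is to note that $U$ preserves colimits and to quote \cite{MR}~3.8, which allows one to pull a cofibrantly generated class back along a colimit-preserving functor between locally presentable categories; Lemma~\ref{pure} then identifies the pulled-back class with the pure monomorphisms. However, your execution of the key step $\cy^\square\subseteq\cm^\square$ has a genuine gap.

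The problem is the two extension steps, where you extend $u$ along $\alpha\colon X_i\to UK$ and $p$ along $P\to UL'$ ``using regularly fp-injectivity of $UM$ and $UN$ (Proposition~\ref{reginj}).'' That proposition only gives injectivity of objects of $U(\ck)$ with respect to regular monomorphisms whose domain \emph{and} codomain are finitely presentable, whereas your $\alpha$ and $P\to UL'$ are regular monomorphisms between $\lambda$-presentable objects, with $\lambda$ forced to be large by the uncontrolled presentability ranks of the given generators $\cx$. The stronger injectivity you need is false in general: in the module case $\cu(\Mod_R)$ is the functor category $({}_R\operatorname{mod},\Ab)$, the objects of $U(\ck)$ are the tensor functors $M\otimes_R{-}$, i.e.\ precisely the fp-injective objects, and $M\otimes_R{-}$ is injective with respect to arbitrary (regular) monomorphisms only when $M$ is pure injective; so for $M$ not pure injective there is a monomorphism between $\lambda$-presentable functors, for suitable $\lambda$, along which some map into $UM$ does not extend. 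Hence $\bar u$ and $\bar v_*$ need not exist. In addition, the ``embedding property'' (every $\lambda$-presentable object of $\cu(\ck)$ admits a regular monomorphism into some $UA$ with $A$ $\lambda$-presentable), which you flag as the main obstacle, is indeed unproved: the dual of \cite{ARV}~17.8 concerns $\ca_{\coex}$ only, and writing a $\lambda$-presentable object of $\Ind\ca_{\coex}$ as a $\lambda$-small colimit of objects $C_i$ does not let you ``combine'' the individual regular monomorphisms $C_i\to UA_i$, since these carry no compatibility with the diagram. Both difficulties evaporate if, instead of transferring one lifting problem at a time, you transfer the entire cofibrantly generated class at once along the colimit-preserving $U$ via \cite{MR}~3.8, as the paper does.
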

\begin{proof}
Assume that regular monomorphisms are cofibrantly generated in the (locally finitely presentable) category $\cu(\ck)$.  Since the embedding $U:\ck\to\cu(\ck)$ preserves colimits, \cite{MR} 3.8 implies that we can pull this cofibrantly generated class back along $U$ to obtain one on $\ck$: the class of all maps $h$ in $\ck$ with $Uh$ a regular monomorphism in $\cu(\ck)$ will be cofibrantly generated.  In light of Lemma~\ref{pure}, this says precisely that the class of pure monomorphisms in $\ck$ is cofibrantly generated.
\end{proof}

\begin{rem}
{
\em
We note that the converse of Theorem~\ref{stable} holds as well: regular monomorphisms in $\cu(\ck)$ are cofibrantly generated provided that pure monomorphisms in $\ck$ are. A proof will appear in forthcoming work.
}
\end{rem}

\begin{coro}\label{abelian}
Any locally finitely presentable additive category has cofibrantly generated pure monomorphisms.
\end{coro}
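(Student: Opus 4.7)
The plan is to combine Theorem~\ref{stable} with the classical fact that monomorphisms are cofibrantly generated in any Grothendieck abelian category, using Remark~\ref{pureinj1}(2) as the bridge.

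First I would invoke Remark~\ref{pureinj1}(2): if $\ck$ is locally finitely presentable additive, then $\ca_{\coex}$ is abelian, so $\cu(\ck)=\Ind\ca_{\coex}$ is a locally finitely presentable abelian category, and in particular a Grothendieck category (cocomplete abelian with a generating set of finitely presentable objects, hence satisfying AB5 with a generator). In an abelian category every monomorphism is a kernel, so the class of regular monomorphisms in $\cu(\ck)$ coincides with the class of all monomorphisms.

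Next I would observe that monomorphisms in a Grothendieck category are cofibrantly generated. This is classical: if $G$ is a generator of $\cu(\ck)$ and one takes the set $\cx$ of subobject inclusions $\{S\hookrightarrow G\}$ (which is a set because $G$ has only a set of subobjects), then a straightforward transfinite small-object argument shows that every monomorphism is a transfinite composition of pushouts of maps in $\cx$; retracts are not even needed. Alternatively, one may quote the usual proof that Grothendieck categories have enough injectives, which produces precisely this cofibrantly generating set.

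Finally I would apply Theorem~\ref{stable}: since regular monomorphisms in $\cu(\ck)$ are cofibrantly generated, pure monomorphisms in $\ck$ are cofibrantly generated. The only real content beyond the machinery already set up in the paper is the classical cofibrant generation of monomorphisms in Grothendieck categories; I expect this to be the main, though standard, step, and I would cite it (e.g.\ Beke's paper on sheafifiable homotopy model categories, or \cite{AR}) rather than reprove it. No other obstacle is anticipated, as additivity enters only through Remark~\ref{pureinj1}(2) to guarantee abelianness of $\cu(\ck)$.
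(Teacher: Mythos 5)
Your proposal is correct and follows essentially the same route as the paper: both use Remark~\ref{pureinj1}(2) to get that $\cu(\ck)$ is abelian, deduce that regular monomorphisms in $\cu(\ck)$ are cofibrantly generated, and conclude via Theorem~\ref{stable}. The only difference is in how the middle step is justified---the paper cites effective unions in abelian categories (Barr, Borceux--Rosick\'y) where you cite the classical fact that monomorphisms in a Grothendieck category are cofibrantly generated by subobject inclusions of a generator---but these rest on the same transfinite argument, since the pushout step in your small-object construction is exactly the effective-unions property.
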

\begin{proof}
Since $\cu(\ck)$ is abelian (by Remark~\ref{pureinj1}(2)), it has effective unions. Thus regular monomorphisms are cofibrantly generated in $\cu(\ck)$ and we can apply \ref{stable}.
\end{proof}

\begin{coro} \label{add-pureinj}
Any locally finitely presentable additive category has enough pure
injectives.
\end{coro}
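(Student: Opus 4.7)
The plan is to verify that the hypothesis of Theorem~\ref{pureinj} holds automatically when $\ck$ is additive, and then invoke that theorem. Since the bulk of the machinery has already been set up---Theorem~\ref{pureinj} promises enough pure injectives in $\ck$ as soon as $\cu(\ck)$ has enough regular injectives, and Remark~\ref{pureinj1}(2) essentially packages the additive case---the proof should be little more than an assembly of these pieces.

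First I would observe that if $\ck$ is locally finitely presentable and additive, then the full subcategory $\ca\subseteq\ck$ of finitely presentable objects is itself additive and finitely cocomplete: finite coproducts exist because they are finite colimits, and cokernels arise as coequalizers with a zero morphism. Next I would cite the result recalled in Remark~\ref{pureinj1}(2) that the coexact completion of any finitely cocomplete additive category is abelian (see e.g.\ \cite{RV}); this gives that $\ca_{\coex}$ is abelian. It then follows that $\cu(\ck)=\Ind\ca_{\coex}$ is a locally finitely presentable abelian category, and hence a Grothendieck abelian category.

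Any Grothendieck abelian category has enough injectives, and because every monomorphism in an abelian category is regular, the notions of injective and regular injective object coincide. Consequently $\cu(\ck)$ has enough regular injectives, so Theorem~\ref{pureinj} applies and yields that $\ck$ has enough pure injectives. I do not anticipate a genuine obstacle here: the only nontrivial input is the abelianness of $\ca_{\coex}$, which is imported from \cite{RV}; everything else is a formal consequence of the representation-category construction of Section~\ref{repr-secn} together with Theorem~\ref{pureinj}.
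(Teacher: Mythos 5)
Your proposal is correct and follows essentially the same route as the paper: the paper's proof simply cites Theorem~\ref{pureinj}, whose hypothesis is supplied by Remark~\ref{pureinj1}(2) (abelianness of $\ca_{\coex}$ via \cite{RV}, hence $\cu(\ck)$ is Grothendieck abelian with enough regular injectives). You have merely written out in full the steps the paper leaves implicit.
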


\begin{proof}
 Follows from Theorem~\ref{pureinj} or from Corollary~\ref{abelian}.
\end{proof}

Given a locally finitely presentable additive category $\ck$, let $\cx$ be a cofibrantly generating set of pure monomorphisms.  One can easily verify that an object in $\ck$ is pure injective if and only if it is injective with respect to each morphism in $\cx$.  That is,

\begin{coro}\label{corsinj}
In any locally finitely presentable additive category, pure injectives form a small-injectivity class.
\end{coro}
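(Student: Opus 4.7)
The plan is to take $\cx$ to be a cofibrantly generating set of pure monomorphisms, which exists by Corollary~\ref{abelian}, and then show directly that an object $K\in\ck$ is pure injective if and only if it is injective with respect to every morphism in $\cx$. This equivalence exhibits the class of pure injectives as the small-injectivity class determined by $\cx$.

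The forward direction is immediate: every morphism in $\cx$ is a pure monomorphism, so any pure injective is injective with respect to each $f\in\cx$. For the converse, fix an object $K$ and let
\[
\cx_K=\{f\in\ck^\to \mid K\text{ is injective with respect to }f\}.
\]
The key observation is that $\cx_K$ is cofibrantly closed. Indeed, ``$K$ is injective with respect to $f$'' is equivalent to saying that $f$ has the left lifting property with respect to the unique morphism $K\to 1$ in $\ck$ (where $1$ is a terminal object, which exists since $\ck$ is complete); in symbols, $\cx_K={}^\square\{K\to 1\}$. Classes of morphisms of the form ${}^\square\cg$ are always closed under pushouts, transfinite compositions, and retracts in $\ck^\to$, so $\cx_K$ is cofibrantly closed.

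Since $\cx\subseteq\cx_K$ by hypothesis, and $\cx_K$ is cofibrantly closed, $\cx_K$ must contain the cofibrant closure of $\cx$. By choice of $\cx$, this closure is precisely the class of all pure monomorphisms in $\ck$. Hence $K$ is injective with respect to every pure monomorphism, i.e.\ $K$ is pure injective. This establishes the equivalence and identifies the pure injectives as the objects injective with respect to the set $\cx$, completing the proof. The only nontrivial ingredient is the general fact that ${}^\square\cg$ is cofibrantly closed for any class $\cg$; this is routine, but worth noting as the one step where the structure of weak factorization systems is used.
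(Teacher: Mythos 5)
Your proof is correct and follows the same route as the paper, which simply asserts (``one can easily verify'') that an object is pure injective if and only if it is injective with respect to each morphism of the cofibrantly generating set $\cx$ from Corollary~\ref{abelian}. Your lifting-property argument---observing that the morphisms with respect to which a fixed $K$ is injective form the class ${}^\square\{K\to 1\}$, which is cofibrantly closed and contains $\cx$, hence contains all pure monomorphisms---is exactly the intended verification, just written out in full.
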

 
\begin{rem}\label{remtestset}
{
\em
This was recently proven for module categories (using slightly different language, i.e. \emph{test sets}) in \cite{ST}.
}
\end{rem}

It follows directly from Corollary~\ref{corsinj} that pure injectives in any locally finitely presentable additive category form an accessible category (see
\cite[4.16]{AR}).

\section{Examples of locally finitely presentable additive categories}

 In this section, we exhibit two examples of ``badly behaved'' locally
finitely presentable additive categories that illustrate how badly the situation above breaks down if we consider not the pure monomorphisms and injectives but rather, say, the regular ones.
 Both examples are constructed using the following general technique.
 Let $Q$ be a finite quiver without relations (i.e., simply a finite
oriented graph) and $F$~be a field.
 Assume that $Q$ is acyclic and has finite representation type.
 This means that the path algebra $F[Q]$ is finite-dimensional and
all $F[Q]$-modules are direct sums of finite-dimensional ones.
 Then the category $\cl$ of $F[Q]$-modules is pure semisimple, that is,
all pure exact sequences of $F[Q]$-modules are split and all $F[Q]$-modules
are pure injective.

 We pick a subset $\ci$ of isomorphism classes of indecomposable
$F[Q]$-modules and let $\ck$ be the full subcategory of $\cl$ whose
objects are all the direct sums of modules from~$\ci$.
 Since all pure exact sequences in $\cl$ are split and $\ck$ is closed under
direct sums and direct summands in $\cl$, it follows that the full
subcategory $\ck\subset\cl$ is closed under directed colimits.
 Consequently, $\ck$ is a finitely accessible additive category,
and finitely presentable objects in $\ck$ are precisely
the finite-dimensional modules (i.e., the finite direct sums of modules
from~$\ci$).
 Moreover, we will pick our examples in such a way that $\ck$ is either
reflective or coreflective in $\cl$; then $\ck$ is locally finitely
presentable.
 
 All pure monomorphisms are split in $\ck$, and all objects are pure
injective. But nonpure monomorphisms and injective objects may behave badly (depending on how the subset $\ci$ is chosen). Note also that, while split monomorphisms are cofibrantly generated here, they need not be in general (consider for example the category of abelian groups \cite[2.6]{Ro}).

 The following general definitions related to additive categories will be
useful for our discussion.
 An additive category is said to be \emph{preabelian} if all of its morphisms have kernels and cokernels (equivalently, there are finite limits
and colimits).
 Any locally presentable additive category has all limits and colimits, and is therefore preabelian.

 Let $z:X\to Y$ be a morphism in a preabelian category $\ck$.
 Let $k:K\to X$ and $c:Y\to C$ be the kernel and cokernel of $z$.
 Let $X\to J=\operatorname{coim}(z)$ be the cokernel of $k$ and
$\operatorname{im}(z)=I\to Y$ be the kernel of $c$; so the morphism $f$
decomposes as $X\to J\to I\to Y$.
 The category $\ck$ is said to be \emph{left semi-abelian} if
the morphism $J\to I$ is a monomorphism for all morphisms $z$.
 Dually, $\ck$ is \emph{right semi-abelian} if $J\to I$ is an epimorphism
for all morphisms $f$.
 A discussion of these notions can be found in the papers~\cite{Ru,KW} and
also in~\cite[Example~A.5(7)]{Po}.

 In the language of category theory, an additive category being left
semi-abelian means that it admits (regular epimorphism,
monomorphism)-factorizations, while right semi-abelianity means
that it admits (epimorphism, regular monomorphism)-factor\-ization.
 For any left semi-abelian category $\ck$ and a diagram~($*$) as in
Section~\ref{repr-secn} such that conditions~(2) and~(3) are satisfied,
the morphism $h$~is a monomorphism, as one can see by applying the left
semi-abelianity condition to the morphism $z=(f,g):A\oplus B\to C$.
 But $h$~does not need to be a regular monomorphism.

 Conversely, any preabelian additive category $\ck$ with effective unions
is left semi-abelian.
 Indeed, let $z:X\to Y$ be a morphism in $\ck$.
 Then, in order to show that the induced morphism $\operatorname{coim}(z)
\to\operatorname{im}(z)$ is a monomorphism, one can apply the condition
of existence of effective unions to the pair of spit monomorphisms
$f=(1,0):X\to X\oplus Y$ and $g=(1,z):X\to X\oplus Y$ between the objects
$A=B=X$ and $C=X\oplus Y$ (cf.\ Example~\ref{right-exam} below).

\begin{exam}
{\em
 This example was suggested in \cite[Example~A.5(7)]{Po}.
 Let $Q$ be the quiver $\bullet\to\bullet\to\bullet$.
 This means that $F[Q]$-modules $V$ are sequences of $F$-vector spaces and
linear maps $V^{(1)}\to V^{(2)}\to V^{(3)}$.
 There are $6$ indecomposable $F[Q]$-modules, which can be denoted by
$E_1$, $E_2$, $E_3$, $E_{12}$, $E_{23}$, and $E_{123}$.
 Here the subindex $J$ denotes a subset of vertices of the quiver $Q$,
and the representation $E_J$ is described by the rules $E_J^{(i)}=0$ for
$i\notin J$ and $E_J^{(i)}=F$ for $i\in J$.
 The map $E_J^{(i)}\to E_J^{(i+1)}$, \ $1\le i\le 2$, is the identity map
whenever $E_J^{(i)}=F=E_J^{(i+1)}$.

 We let the set $\ci$ consist of all the indecomposable $F[Q]$-modules
except~$E_{12}$.
 Notice that $E_{12}$ is an injective object of the abelian category $\cl$;
therefore, the full subcategory $\ck\subset\cl$ consisting of direct sums of modules in $\ci$ is closed under
subobjects (hence under all limits).
 Moreover, $\ck$ is reflective in $\cl$; the reflector $R:\cl
\to\ck$ takes the object $E_{12}$ to $E_1$.
 For any morphism $z:X\to Y$ in $\ck$, the object
$\operatorname{cok}_\ck(z)=R(\operatorname{cok}_\cl(z))$ is a certain
quotient module of $\operatorname{cok}_\cl(z)$, while
$\ker_\ck(z)=\ker_\cl(z)$.
 Hence $\operatorname{im}_\cl(z)$ is a submodule of
$\operatorname{im}_\ck(z)$, while $\operatorname{coim}_\ck(z)=
\operatorname{coim}_\cl(z)$.
 Therefore, the category $\ck$ is left, but not right, semi-abelian.
 In fact, taking $z$ to be a nonzero morphism $E_3\to E_{123}$,
the related morphism $J=E_3\to E_{23}=I$ is a regular monomorphism
and not an epimorphism in $\ck$.

 In particular, the morphism~$h$ in the diagram~($*$) in the category $\ck$
is always a monomorphism.
 But it is not always a regular monomorphism, even if the morphisms
$f$ and $g$ are; so $\ck$ does not have effective unions.
 Indeed, let $C$ be the quiver representation $F\xrightarrow{1}F
\xrightarrow{(1,1)}F\oplus F$.
 Let $A$ and $B$ be the subrepresentations $0\to 0\to F\oplus 0$
and $0\to 0\to 0\oplus F$ of $C$.
 We have $C\simeq E_{123}\oplus E_3$ and $A\simeq B\simeq E_3$; so
$A$, $B$, and $C$ are objects of the full subcategory $\ck\subset\cl$.
 Both $A$ and $B$ are split subobjects of $C$; so the morphisms $f$
and $g$ in the diagram~($*$) are split monomorphisms.
 Furthermore, the intersection $D=A\cap B$ vanishes; hence the object
$E$ is $E=A\oplus B$.
 The monomorphism $h:E\to C$ is isomorphic to the direct sum of the identity
morphism $E_3\to E_3$ and a nonzero morphism $E_3\to E_{123}$.
 The morphism $E_3\to E_{123}$ is a nonregular monomorphism in $\ck$;
hence $h$~is not a regular monomorphism, either.

 Furthermore, consider a nonzero morphism $k:E_{23}\to E_{123}$ and
a nonzero morphism $p:E_{23}\to E_2$.
 Then $k$~is a regular monomorphism and $p$~is a regular epimorphism
in~$\ck$.
 The pushout of $k$ by $p$ in the category $\cl$ is a nonzero morphism
$l':E_2\to E_{12}$.
 But $E_{12}$ is not an object of $\ck$.
 Applying the reflector $R$, we see that the pushout of $k$ by $p$
in $\ck$ is the zero morphism $k':E_2\to E_1$.
 So neither monomorphisms nor regular monomorphisms are stable under
pushouts in $\ck$.

 Finally, we claim that the object $E_2$ is not a subobject of any regular
injective object in $\ck$.
 In fact, any morphism in $\ck$ having $E_2$ as the domain is either zero,
or a split monomorphism.
 It remains to show that $E_2$ is not a regular injective in $\ck$.
 Indeed, $k:E_{23}\to E_{123}$ is a regular monomorphism and 
the morphism $p:E_{23}\to E_2$ cannot be extended over $k$.
 One can recall that in an abelian category an object is injective if and
only if it always splits when embedded as a subobject into any other object.
 The object $E_2\in\ck$ satisfies the latter condition, but it is not
regular injective.
 So injectivity of objects in $\ck$ does not behave as one might expect.}
\end{exam}
 
\begin{exam} \label{right-exam}
{\em
 Let $Q$ be the same quiver as in the previous example.
 We let the set $\ci$ consist of all the indecomposable $F[Q]$-modules
except $E_{23}$.

 Notice that $E_{23}$ is a projective object of the abelian category
$\cl$; therefore, the full subcategory $\ck\subset\cl$ is closed under
quotient objects (hence under all colimits).
 The full subcategory $\ck$ is coreflective in $\cl$; the coreflector
$S:\cl\to\ck$ takes the object $E_{23}$ to $E_3$.
 For any morphism $z:X\to Y$ in $\ck$, the object $\ker_\ck(z)=
S(\ker_\cl(z))$ is a certain submodule of $\ker_\cl(z)$, while
$\operatorname{cok}_\ck(z)=\operatorname{cok}_\cl(z)$.
 Hence $\operatorname{coim}_\cl(z)$ is a quotient module of
$\operatorname{coim}_\ck(z)$, while $\operatorname{im}_\ck(z)=
\operatorname{im}_\cl(z)$.
 Therefore, the category $\ck$ is right, but not left, semi-abelian.
 In fact, taking $z$ to be a nonzero morphism $E_{123}\to E_1$,
the related morphism $\operatorname{coim}_\ck(z)=E_{12}\to E_1=
\operatorname{im}_\ck(z)$ is a regular epimorphism and not
a monomorphism in $\ck$.

 All monomorphisms in $\ck$ are regular, and monomorphisms are stable
under push\-outs.
 There are enough (regular) injectives in $\ck$, which coincide with
the injective objects of $\cl$.
 However, the category $\ck$ does not have effective unions.
 Indeed, let $C$ be the quiver representation $F\oplus F\xrightarrow{(1,1)}
F\xrightarrow 1 F$.
 Let $A$ and $B$ be the submodules $F\oplus 0\to F\to F$ and
$0\oplus F\to F\to F$ of $C$.
 We have $C\simeq E_1\oplus E_{123}$ and $A\simeq B\simeq E_{123}$;
so $A$, $B$, and $C$ are objects of the full subcategory $\ck\subset\cl$.
 Both $A$ and $B$ are split subobjects of $C$; so the morphisms $f$
and $g$ in the diagram~($*$) are split monomorphisms.
 Furthermore, the intersection of the submodules $A$ and $B$ in $C$
is isomorphic to $E_{23}$.
 But $E_{23}$ is not an object of $\ck$.
 Applying the coreflector $S$, we see that the object $D$ in
the diagram~($*$) in the category $\ck$ is $D\simeq E_3$.
 Hence one easily computes that the object $E$ in~($*$) is
isomorphic to $E_{12}\oplus E_{123}$.
 The morphism $h:E\to C$ is not a monomorphism; in fact, it is
a regular epimorphism in~$\ck$.
}
\end{exam}

\bigskip


\begin{thebibliography}{99}
\itemsep=2pt

\bibitem{AHRT} J. Ad\'{a}mek, H. Herrlich, J. Rosick\'{y} and W. Tholen, {\em Weak factorization systems
and topological functors}, Appl. Cat. Struct. 10 (2002), 237-249.  

\bibitem{AR} J. Ad\'{a}mek and J. Rosick\'{y}, {\em Locally Presentable and Accessible Categories}, Cambridge University
Press 1994.

\bibitem{AR1} J. Ad\'{a}mek and J. Rosick\'{y}, {\em On pure quotients and pure subobjects}, Czechoslovak Math. J. 54 (2004),
623-636.

\bibitem{ARV}  J. Ad\'{a}mek, J. Rosick\'{y} and E. M. Vitale, {\em Algebraic Theories}, Cambridge University Press 2010.

\bibitem{Ba} M. Barr, {\em On categories with effective unions}, In. Categ. Alg. and its Appl., Lecture
Notes in Math. 1348, Springer-Verlag 1988, 19--35.

\bibitem{BGV} M. Barr, A. P. Grillet and D. H. van Osdol, {\em Exact Categories and Categories of Sheaves}, Lect. Notes Math. 236, Springer-Verlag 1971.

\bibitem{B} T. Beke, {\em Sheafifiable homotopy model categories}, Math. Proc. Cambr. Phil. Soc. {\bf 129} (2000), 447-475.
\bibitem{BR} F. Borceux and J. Rosick\'y, {\em Purity in algebra}, Alg. Univ. 56 (2007), 17-35.


\bibitem{CM} A. Carboni and R. Celia Magno, {\em The free exact category on a left exact one}, J. Austral. Math. Soc. Ser. A 33 (1982), 295-301.

\bibitem{C} W. Crawley-Bovey, {\em Locally finitely presented additive categories}, Comm. Alg. 22 (1994), 1644-1674.

\bibitem{EJ} E. Enochs, O. Jenda, {\em Relative Homological Algebra}, W. de Gruyter 2000.


\bibitem{fakir} S. Fakir, {\em Objets alg\'ebraiquements clos et injectifs dans les cat\'egories localement pr\'esentables}, M\'emoires de la Soci\'et\'e Math\'ematique de France 42 (1975).

\bibitem{GU} P. Gabriel and F. Ulmer, {\em Lokal Pr\"asentierbare Kategorien}, Lect. Notes Math. 221, Springer-Verlag 1971.

\bibitem{He} I. Herzog, {\em Pure-injective envelopes}, J. Alg. Appl. 4 (2003), 397-402.

\bibitem{H} H. Hu, {\em Some results on weakly locally presentable categories}, preprint 1994.

\bibitem{kucera-mazari} T. Kucera and M. Mazari-Armida, {\em On universal modules with pure embeddings}, arXiv:1903.00414v2.
  
\bibitem{KW} Ya. Kopylov, S.-A. Wegner, {\em On the notion of a semi-abelian
category in the sense of Palamodov}, Appl. Categ. Struct. 20 (2012), 531-541.

\bibitem{K} H. Krause, {\em Functors on locally finitely presented additive categories}, Colloq. Math. 75 (1998), 105-132.


\bibitem{ML} S. Mac Lane, {\em Categories for the Working Mathematician}, Springer 1998.

\bibitem{MR} M. Makkai and J. Rosick\'y, {\em Cellular categories}, J. Pure Appl. Algebra 218 (2014), 1652-1664.

\bibitem{LRV} M. Lieberman, J. Rosick\'y and S. Vasey, {\em Weak factorization systems and stable independence}, arxiv:1904.05691.   
 
\bibitem{MP} M. Makkai and R. Par\' e, {\em Accessible Categories: The Foundations of Categorical Model Theory}, AMS 1989.

\bibitem{Po} L. Positselski, {\em Mixed Artin-Tate motives with finite
coefficients}, Moscow Math. J. 11 (2011), 317-402.

\bibitem{P} M. Prest, {\em Purity, Spectra and Localization}, Cambridge University Press 2009.

\bibitem{Pr} H. Pr\"ufer, {\em Untersuchungen über die Zerlegbarkeit der abz\"ahlbaren prim\"aren abelschen Gruppen}, Math. Zeitschr.
17 (1923), 35-61.


\bibitem{Ro} J. Rosick\'y {\em Accessible model categories}, Applied Categorical Structures 25 (2017), 187-196.

\bibitem{RT} J. Rosick\'y and W. Tholen, {\em Factorization, fibration and torsion}, J. Homotopy and Rel. Struct. 2 (2007), 295-314.

\bibitem{RV} J. Rosick\'y and E. M. Vitale, {\em   Exact completion and representations in abelian categories}, Homol. Homot. and Appl. 3 (2001), 453-466.

\bibitem{Ru} W. Rump, {\em Almost abelian categories}, Cahiers Topologie
G\'eom. Diff. Cat\'eg. 42 (2001), 163-225.

\bibitem{ST} J. \v Saroch and J. Trlifaj, {\em Test sets for factorization properties of modules}, arXiv:1912.03749.
 
\end{thebibliography}

\end{document}